\documentclass[conference]{IEEEtran}
\IEEEoverridecommandlockouts

\usepackage{mathrsfs}
\usepackage{graphics} 

\usepackage{amsmath} 
\usepackage{amssymb}  
\usepackage{amsthm}
\usepackage{cite}
\usepackage{bm}
\usepackage{acronym}
\usepackage{paralist}
\usepackage{float}
\usepackage{color}
\usepackage{epstopdf}
\usepackage{multicol}
\usepackage{tikz}
\usepackage{hyperref}
\usepackage{graphicx}
\usepackage{soul}
\usepackage{mathtools}

\makeatletter
\hypersetup{colorlinks=true}
\AtBeginDocument{\@ifpackageloaded{hyperref}
  {\def\@linkcolor{blue}
  \def\@anchorcolor{red}
  \def\@citecolor{red}
  \def\@filecolor{red}
  \def\@urlcolor{red}
  \def\@menucolor{red}
  \def\@pagecolor{red}
\begingroup
  \@makeother\`%
  \@makeother\=%
  \edef\x{%
    \edef\noexpand\x{%
      \endgroup
      \noexpand\toks@{%
        \catcode 96=\noexpand\the\catcode`\noexpand\`\relax
        \catcode 61=\noexpand\the\catcode`\noexpand\=\relax
      }%
    }%
    \noexpand\x
  }%
\x
\@makeother\`
\@makeother\=
}{}}
\makeatother

\newtheorem{Theorem}{Theorem}

\newtheorem{Lemma}{Lemma}

\newtheorem{Remark}{Remark}

\newtheorem{Corollary}{Corollary}

\newtheorem{Definition}{Definition}

\newcommand{\bequ}{\begin{eqnarray}}
\newcommand{\eequ}{\end{eqnarray}}

\def\IR{{\mathbb R}}

\IEEEoverridecommandlockouts

\def\BibTeX{{\rm B\kern-.05em{\sc i\kern-.025em b}\kern-.08em
    T\kern-.1667em\lower.7ex\hbox{E}\kern-.125emX}}

\begin{document}

\title{Equivalence of Finite- and Fixed-time Stability to Asymptotic Stability}

\author{Kunal Garg, \IEEEmembership{Member, IEEE}
\thanks{The author is with the School for Engineering of Matter, Transport, and Energy at Arizona State University, Tempe, AZ, 85281. Email: \texttt{kgarg24@asu.edu}.}
}
\maketitle

\begin{abstract}
In this paper, we present new results on finite- and fixed-time convergence for dynamical systems using LaSalle-like invariance principles. In particular, we provide first and second-order non-smooth Lyapunov-like results for finite- and fixed-time convergence, thereby relaxing the requirement of existence a differentiable, positive definite Lyapunov function. Based on these findings, we show that a dynamical system whose equilibrium point is globally asymptotically stable can be modified through scaling so that the resulting dynamical system has a fixed-time stable equilibrium point. The results in this paper expand our understanding of various convergence rates and strengthen the hypothesis that all the convergence rates are interconnected through a suitable transformation. 
\end{abstract}

\section{Introduction}
The notions of asymptotic, exponential, finite- and fixed-time stability have been studied extensively in the past couple of decades, with focus on theoretical advancements and new applications of faster convergence notions \cite{bhat2005geometric,polyakov2012nonlinear,garg2020fixed,garg2022fixed,liu2023multiobjective,poveda2022fixed,zhao2023prescribed}.
It has been shown that faster convergence rates result in improved robustness against both vanishing and non-vanishing additive disturbances \cite{bhat2000finite,polyakov2012nonlinear}, so these faster notions of convergence are of particular importance for their applications, say, in control and optimization. As the focus on machine learning and methods for optimizing neural network training has grown, these notions of faster convergence have attracted significant attention in recent years \cite{budhraja2022breaking,ju2023neurodynamic,yang2025twonovel,ozbek2025survey}. 

However, one of the theoretical questions researchers have been trying to answer lately is: What is the relationship between various convergence notions? 
Such a question was answered in the context of control systems in \cite{bhat2005geometric}, where the authors proved that it is possible to design a continuous finite-time stabilizing controller for a controllable linear time-invariant system. In the context of optimization, similar equivalence results have been studied ever since the notion of finite and fixed-time stability became popular. Based on the results in \cite{cortes2006finite}, the authors in \cite{cortes2008discontinuous} showed that normalization of gradient flows, whose equilibrium points are asymptotically stable (AS), leads to finite-time stability (FTS). These results were further extended in \cite{garg2020fixed}, where novel modifications for gradient flows in the context of convex optimization were introduced that convert a system whose equilibrium point is exponentially stable (ES) to a system for which the same equilibrium is fixed-time stable (FxTS). This equivalence was further formalized in \cite{garg2022fixed} and generalized for non-smooth optimization problems, where it was shown that the modification is a natural result of temporal transformation from infinity to a fixed number. In the context of optimization, convexity or equivalent notions naturally help guarantee asymptotic convergence, and the focus remains on the modification of the dynamics to accelerate this convergence. As a result, these results are so far limited to specific classes of dynamical systems, such as gradient flows. More recent work focuses on \textit{smooth} Lyapunov analysis, with the aim of deriving sufficient conditions assuming the availability of a differentiable Lyapunov function or its construction. In \cite{ozaslan2024exponential}, the authors prove the equivalence of ES and FTS and FxTS, i.e., if an equilibrium point for a dynamical system of the form \eqref{eq: sys} is ES, then the equilibrium point for a scaled dynamical system, scaled properly with the norm of the dynamics function $f$, has the same equilibrium point FTS or even FxTS. 

In the context of non-smooth analysis, the authors in \cite{cortes2006finite} showed that normalized gradient flows, under the assumption of invariance of a compact set containing the equilibrium point, have the equilibrium point FTS. The results in \cite{cortes2006finite} utilized non-smooth analysis along with a variation of the invariance principle to provide conditions for FTS without resorting to a differentiable Lyapunov function. We generalize this idea further and, instead of restricting our discussion to only gradient flows, illustrate how all the notions, namely, AS, ES, FTS, and FxTS, are essentially equivalent under a proper transformation. 

The main contributions of this work are twofold. One, we present extensions of prior results on non-smooth Lyapunov-like conditions for FTS and FxTS, utilizing invariance principles based on global AS (GAS). In particular, we provide novel first and second-order non-smooth results for FTS and FxTS under the assumption of GAS for a general class of nonlinear dynamical systems. Then, we show that a general dynamical system, whose equilibrium is GAS, can be modified, through normalization or scaling, so that the same equilibrium point becomes FTS or FxTS for the modified system, thereby proving the equivalence of AS, FTS, and FxTS under a suitable geometric transformation of the system dynamics.

The rest of the paper is organized as follows. Section \ref{sec: math prelim} provides the relevant background and preliminary results. Section \ref{sec: new results} provides an extension of the existing results to provide a base for the main results on the equivalence of AS, ES, FTS, and FxTS, which are presented in Section \ref{sec: AS FTS equivalence}. Lastly, Section \ref{sec: conclusions} provides the conclusions and directions for future work. 

\section{Mathematical Preliminaries}\label{sec: math prelim}
\textit{Notations}: In the rest of the paper, $\IR$ denotes the set of real numbers and $\mathbb R_+$ denotes the set of non-negative real numbers. We use $\|\cdot\|_p$ to denote the $p-$norm, and $\|\cdot\|$ is used to denote the Euclidean norm. In addition, we write $\partial S$ for the boundary of the closed set $S$, $\textrm{int}(S)$ for its interior, and $|x|_S = \inf_{y\in S}\|x-y\|$ for the distance of the point $x\notin S$ from the set $S$. The Lie derivative of a continuously differentiable function $V:\mathbb R^n\rightarrow \mathbb R$ along a continuous vector field $f:\mathbb R^n\rightarrow\mathbb R^n$ at a point $x\in \mathbb R^n$ is denoted as $L_fV(x) \triangleq \frac{\partial V}{\partial x} f(x)$, while that of a locally Lipschitz function $V$ is defined as a set-valued map $\tilde L_fV(x) = \{a\in \mathbb R\; |\; \eta ^Tf(x) = a, \eta\in \partial V(x)\}$, where $\partial V(x) = \textnormal{co}\{\lim_{i\to +\infty}dV(x_i)\; |\; x_i\to x, x_i\notin Z\cup \Omega_V\}$ where $\Omega_V$ is the set of points where $V$ fails to be differentiable and $Z\subset\mathbb R^n$ is any set of measure zero. Note that when $\tilde L_fV$ is single-valued, it holds that $\tilde L_fV = \{L_fV\}$.

Consider the nonlinear system
\begin{align}\label{eq: sys}
\dot x = f(x), \quad x(0) = x_0,
\end{align}
where $x\in \mathbb R^n$ and $f: \mathbb R^n \rightarrow \mathbb R^n$ is continuous with $f(0)=0$, without loss of generality. We assume that there exists a unique solution for \eqref{eq: sys} for each $x(0)\in \mathbb R^n$ for all $t\geq 0$. The following results, from \cite{cortes2006finite}, presented here for the reader's benefit, form the base for the main results in this paper. In plain words, the following results establish the relationship between the time derivatives of a possibly non-differentiable function along the solutions of \eqref{eq: sys}, which serves as the basis for nonsmooth analysis.  

\begin{Lemma}\label{lemma: 1}\hspace{-0.1pt}\cite[Theorem 1]{cortes2006finite}
    Let $x:\mathbb R_+\to \mathbb R^n$ be a Fillipov solution of \eqref{eq: sys} and let $V:\mathbb R^n\to \mathbb R$ be a locally Lipschitz and regular function. Then, $V\circ x:\mathbb R_+\to \mathbb R$ is absolutely continuous and $\frac{d}{dt}V(x(t))\in \tilde L_fV(\phi(t))$ almost everywhere for $t\in \mathbb R_+$. Consequently, $\frac{d}{dt}V(x(t))\leq \max  \tilde L_fV(x(t))$ almost everywhere for $t\in \mathbb R_+$.
\end{Lemma}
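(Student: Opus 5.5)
The argument rests on two facts: absolute continuity of compositions, and the chain rule for regular functions, which Clarke's theory supplies through one-sided directional derivatives. Throughout, $\phi(t)$ in the statement is read as $x(t)$.

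\emph{Step 1: absolute continuity.} Fix a compact interval $[0,T]$. The Filippov solution $x$ is absolutely continuous, so $x([0,T])$ is compact and has a bounded neighborhood $K$ on which $V$ is Lipschitz with some constant $L_K$. Then $|V(x(t))-V(x(s))|\le L_K\|x(t)-x(s)\|$. Hence $V\circ x$ inherits the $\varepsilon$--$\delta$ absolute continuity property of $x$ on $[0,T]$. It is therefore differentiable almost everywhere.

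\emph{Step 2: the chain rule.} Fix $t$ at which both $x$ and $V\circ x$ are differentiable and $\dot x(t)\in K[f](x(t))$; this holds for almost every $t$. Here $K[f]$ denotes the Filippov set-valued map, which equals $\{f(x(t))\}$ when $f$ is continuous. Write $v=\dot x(t)$. For $h\downarrow 0$ we have $x(t+h)=x(t)+hv+o(h)$. The Lipschitz property then gives
\[
\frac{V(x(t+h))-V(x(t))}{h} = \frac{V(x(t)+hv)-V(x(t))}{h}+o(1).
\]
So the right derivative of $V\circ x$ equals the one-sided directional derivative $V'(x(t);v)$. Because $V$ is regular, this equals the Clarke generalized directional derivative, $V^\circ(x(t);v)=\max_{\zeta\in\partial V(x(t))}\zeta^Tv$.

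Repeating the argument with $h\uparrow 0$ gives the left derivative $-V'(x(t);-v)=-V^\circ(x(t);-v)=\min_{\zeta\in\partial V(x(t))}\zeta^Tv$. Since $V\circ x$ is differentiable at $t$, the left and right derivatives agree. Thus $\zeta^Tv$ takes the same value for every $\zeta\in\partial V(x(t))$, and $\frac{d}{dt}V(x(t))=\zeta^Tv$ for any such $\zeta$.

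\emph{Step 3: conclusion.} With $v=f(x(t))$, the common value $\zeta^T f(x(t))$ lies in $\tilde L_fV(x(t))$. If one uses the Filippov set-valued version, $\tilde L_fV(x(t))=\{a: \exists\zeta\in\partial V,\ \zeta^T w=a\ \forall w\in K[f]\}$, then one further point is needed. Here $\zeta^Tv$ must be constant over $v\in K[f](x(t))$ for some $\zeta\in\partial V(x(t))$, and the usual remedy is to define the set accordingly. The inequality $\frac{d}{dt}V(x(t))\le\max\tilde L_fV(x(t))$ follows immediately.

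\emph{Main obstacle.} The main obstacle is Step 2: identifying the two one-sided derivatives with the max and min of $\zeta^Tv$ over $\partial V(x(t))$. This requires regularity of $V$, namely that the directional derivative equals $V^\circ$, together with Clarke's representation of $V^\circ$ as the support function of $\partial V$. The representation of $\partial V$ via limits of gradients that avoid a null set is Clarke's gradient formula, which I would cite rather than reprove.
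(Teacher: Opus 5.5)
The paper gives no proof of its own and only cites the result. Your argument is correct and is essentially the standard Shevitz--Paden/Bacciotti--Ceragioli proof behind that citation: absolute continuity of $V\circ x$, then regularity together with $V^\circ(x;v)=\max_{\zeta\in\partial V(x)}\zeta^Tv$ applied to the right and left difference quotients, which forces $\zeta^T\dot x(t)$ to be constant on $\partial V(x(t))$. For continuous $f$ as in \eqref{eq: sys}, Step 2 already gives the claim under either reading of the paper's definition of $\tilde L_fV$.

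The one slip is in your Step 3 aside, where the quantifiers are reversed. The Filippov set-valued Lie derivative is $\{a : \exists v\in K[f](x) \text{ with } \zeta^Tv=a\ \forall \zeta\in\partial V(x)\}$. With that definition, Step 2 applied to $v=\dot x(t)\in K[f](x(t))$ finishes the proof directly, so no further point or redefinition is needed.
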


\begin{Lemma}\label{lemma: 2}\hspace{-0.1pt}\cite[Proposition 2]{cortes2006finite}
    Let $x:\mathbb R_+\to \mathbb R^n$ be a Fillipov solution of \eqref{eq: sys} and let $V:\mathbb R^n\to \mathbb R$ be a locally Lipschitz and regular function. Assume that $\tilde L_fV$ is single-valued, Lipschitz, and regular. 
    Then, $\frac{d^2}{dt^2}V(x(t))$ exists almost everywhere and $\frac{d^2}{dt^2}V(x(t))\in \tilde L_f(\tilde L_fV(x(t)))$ almost everywhere for $t\in \mathbb R_+$. Consequently, $\frac{d^2}{dt^2}V(x(t))\geq \min \tilde L_f(\tilde L_fV(x(t)))$ almost everywhere for $t\in \mathbb R_+$. 
\end{Lemma}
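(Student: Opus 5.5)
The statement to prove is Lemma \ref{lemma: 2}: $\frac{d^2}{dt^2}V(x(t))$ exists almost everywhere, lies in $\tilde L_f(\tilde L_fV(x(t)))$, and is bounded below by its minimum. The plan is to apply Lemma \ref{lemma: 1} twice. The first application is to $V$ itself and identifies the first derivative of $V\circ x$. The second application is to the auxiliary function $W \triangleq \tilde L_fV$, which we regard as a real-valued function on $\mathbb R^n$ because it is single-valued. This identifies the derivative of $W\circ x$, and it remains to match the two derivatives.

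\textbf{Step 1.} Since $V$ is locally Lipschitz and regular, Lemma \ref{lemma: 1} shows that $V\circ x$ is absolutely continuous and that $\frac{d}{dt}V(x(t))\in \tilde L_fV(x(t))$ for almost every $t$. Because $\tilde L_fV(x(t))$ is the singleton $\{W(x(t))\}$, this gives
\begin{align*}
\frac{d}{dt}V(x(t)) = W(x(t)) \quad \text{for a.e. } t\in\mathbb R_+ .
\end{align*}

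\textbf{Step 2.} By hypothesis $W$ is locally Lipschitz and regular, so Lemma \ref{lemma: 1} applies again with $W$ in place of $V$. Thus $W\circ x$ is absolutely continuous, hence differentiable almost everywhere, and
\begin{align*}
\frac{d}{dt}W(x(t))\in \tilde L_fW(x(t)) = \tilde L_f(\tilde L_fV(x(t))) \quad \text{for a.e. } t .
\end{align*}

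\textbf{Step 3 (main subtlety).} We must identify $\frac{d^2}{dt^2}V(x(t))$ with $\frac{d}{dt}W(x(t))$. The difficulty is that Step 1 gives $\frac{d}{dt}V(x(t))=W(x(t))$ only on a full-measure set, and a function can agree a.e.\ with an absolutely continuous one without being differentiable at the relevant points. We handle this through the Lebesgue integral. Since $V\circ x$ is absolutely continuous,
\begin{align*}
V(x(t)) = V(x(0)) + \int_0^t W(x(s))\,ds .
\end{align*}
The integrand $W\circ x$ is continuous, so the fundamental theorem of calculus shows that $V\circ x$ is in fact $C^1$, with $\frac{d}{dt}V(x(t)) = W(x(t))$ for \emph{every} $t$. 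Consequently the second derivative of $V\circ x$ exists exactly where $W\circ x$ is differentiable, which is almost everywhere, and there it equals $\frac{d}{dt}W(x(t))$. Combining this with Step 2 yields the membership claim.

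\textbf{Step 4.} The inequality $\frac{d^2}{dt^2}V(x(t))\ge \min \tilde L_f(\tilde L_fV(x(t)))$ follows immediately from the membership claim. The minimum is attained because the generalized gradient $\partial W(x)$ is a compact convex set, so its image under the linear map $\eta\mapsto \eta^T f(x)$ is a compact interval.

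Steps 1, 2 and 4 are routine. The only real obstacle is the a.e.-to-everywhere upgrade in Step 3, which relies on the continuity of $W\circ x$.
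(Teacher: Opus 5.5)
Your proof is correct and is essentially the argument behind the cited result \cite[Proposition 2]{cortes2006finite}, which the paper quotes without proof: apply the chain rule of Lemma~\ref{lemma: 1} once to $V$, and then once more to the single-valued, Lipschitz, regular function $\tilde L_fV$. Your Step 3 uses the continuity of $\tilde L_fV\circ x$ to upgrade $\frac{d}{dt}V(x(t))=\tilde L_fV(x(t))$ from almost everywhere to everywhere, so that $V\circ x$ is $C^1$ and its second derivative is well defined wherever $\tilde L_fV\circ x$ is differentiable; this makes explicit the one point that the usual ``apply the chain rule twice'' argument leaves implicit.
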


Since the focus of this work is on convergence rates, we formally define the stability and convergence notions below. These definitions are borrowed from \cite{khalil2002nonlinear,bhat2000finite,polyakov2012nonlinear}.\footnote{Here, with slight abuse of notation and for brevity, we use $x(\cdot)$ to denote the solution of \eqref{eq: sys}.} 
\begin{Definition}
    The equilibrium point $x = 0$ for \eqref{eq: sys} is termed as
    \begin{enumerate}
        \item Lyapunov stable: if for $\epsilon>0$, there exists $\delta = \delta(\epsilon)>0$ such that $\|x(0)\|<\delta\implies \|x(t)\|<\epsilon$ for all $t\geq 0$. 
        \item Asymptotically stable: if it is Lyapunov stable and there exists $c>0$ such that $\|x(t)\|\to 0$ as $t\to \infty$ for all $\|x(0)\|<c$.
        \item Exponentially stable: if it is AS and there exists $k, \lambda>0$ such that $\|x(t)\|\leq \lambda \|x(0)\|e^{-kt}$ for all $t\geq 0$. 
        \item Finite-time stable: if it is Lyapunov stable and there exists a settling-time function $T:\mathbb R^n\to \mathbb R_+$ such that $T(x(0))<\infty$ for all $x(0)\in \mathbb R^n$ and $x(t)\to 0$ as $t\to T(x(0))$. 
        \item Fixed-time stable: if it is FTS and the settling-time function is bounded for all $x(0)\in \mathbb R^n$, i.e., $\lim_{\|x(0)\|\to \infty}T(x(0))<\infty$.
    \end{enumerate}
\end{Definition}

Lyapunov conditions for AS and ES can be found in \cite[Chapter 4]{khalil2002nonlinear}. Lyapunov conditions for FTS of the origin for system \eqref{eq: sys} were first presented in \cite{bhat2000finite}, which we review below. 

\begin{Lemma}\hspace{-0.1pt}\cite[Theorem 4.2]{bhat2000finite}\label{FTS Bhat}
Suppose there exists a positive definite function $V:\mathcal D\to\mathbb R$, where $\mathcal D\subset\mathbb R^n$ is a neighborhood of the origin, constants $c>0$ and $\alpha \in (0, 1)$, and an open neighborhood $\mathcal{V}\subseteq \mathcal{D}$ of the origin such that 
\begin{equation} \label{FTS Lyap}
    (D^+V)(x) + cV(x)^\alpha \leq 0, \; \forall x\in \mathcal{V}\setminus\{0\},
\end{equation}
where $D^+V$ denotes the upper-right Dini derivative of $V$. Then, the origin is an FTS equilibrium of \eqref{eq: sys}. Moreover, the settling time function $T$ satisfies $T(x(0)) \leq \frac{V(x(0))^{1-\alpha}}{c(1-\alpha)}$.
\end{Lemma}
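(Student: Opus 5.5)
The plan is to prove Lyapunov stability first and then obtain the settling-time bound from a scalar comparison argument applied along solutions. Throughout I take $\mathcal V$ bounded with $\overline{\mathcal V}\subset\mathcal D$ (shrinking it if needed), and I assume $V$ is continuous, as is implicit in the positive-definiteness setting.

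\textbf{Stability.} Since $V$ is positive definite, for small $\epsilon>0$ with $\overline{B_\epsilon}\subset\mathcal V$ the quantity $m=\min_{\|x\|=\epsilon}V(x)$ is positive. Continuity and $V(0)=0$ give $\delta\in(0,\epsilon)$ with $V(x)<m$ on $B_\delta$. Along any solution staying in $\mathcal V\setminus\{0\}$, \eqref{FTS Lyap} gives $D^+V(x(t))\le -cV(x(t))^\alpha\le 0$. A Dini-derivative monotonicity lemma (a continuous function with nonpositive upper-right Dini derivative is nonincreasing) then shows $t\mapsto V(x(t))$ is nonincreasing while $x(t)\in\mathcal V$. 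Hence a solution starting in $B_\delta$ can never reach the sphere $\|x\|=\epsilon$, since that would require $V\ge m>V(x(0))$. This proves Lyapunov stability, and it also shows that solutions from $B_\delta$ remain in $\mathcal V$ for all time.

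\textbf{Finite-time convergence.} Fix $x(0)\in B_\delta\setminus\{0\}$ and set $v(t)=V(x(t))$. While $v>0$, define $w(t)=v(t)^{1-\alpha}$. Because $s\mapsto s^{1-\alpha}$ is $C^1$ and increasing on $(0,\infty)$, the chain rule for Dini derivatives gives
\[D^+w(t)=(1-\alpha)v(t)^{-\alpha}D^+v(t)\le -c(1-\alpha).\]
Applying the monotonicity lemma again, now to $w(t)+c(1-\alpha)t$, yields $w(t)\le w(0)-c(1-\alpha)t$. Since $w\ge0$, $v$ must reach $0$ at some time $T\le \frac{V(x(0))^{1-\alpha}}{c(1-\alpha)}$. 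At that time $x(T)=0$ by positive definiteness. Because $v$ is nonnegative and nonincreasing, $v\equiv0$ afterwards, so $x(t)=0$ for all $t\ge T$. Combining this with stability gives the FTS conclusion and the stated settling-time bound; the result is local, with $\mathcal V$ serving as the region of attraction estimate.

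\textbf{Main obstacle.} The delicate step is the Dini-derivative calculus: the chain rule for $D^+$ composed with a $C^1$ increasing function, and the monotonicity lemma for continuous functions with $D^+\le0$, which may require the condition to hold everywhere or except on a countable set. I would first establish the elementary lemma that a continuous $g$ with $D^+g\le0$ is nonincreasing, by perturbing to $g(t)-\eta t$ and taking a sup argument. I would also have to justify that $D^+V(x)$ in \eqref{FTS Lyap}, defined via $\limsup_{h\downarrow0}(V(x+hf(x))-V(x))/h$, bounds $D^+(V\circ x)(t)$ along solutions. This holds when $V$ is locally Lipschitz, because the difference $V(x(t+h))-V(x(t)+hf(x(t)))=o(h)$. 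Otherwise I would read $D^+V$ directly as the Dini derivative along trajectories, which is the standard convention in \cite{bhat2000finite}.
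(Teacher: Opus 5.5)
Your proof is correct, but it does not follow the route the paper relies on. The paper gives no proof of this lemma; it quotes it from \cite{bhat2000finite}. There the settling-time bound comes from a comparison argument: $V(x(t))$ is bounded by the solution of the scalar equation $\dot y=-cy^\alpha$, $y(0)=V(x(0))$, whose explicit form vanishes at $t=V(x(0))^{1-\alpha}/(c(1-\alpha))$. The paper uses the same device, with $\dot v=-\phi(v)$ and \cite[Lemma 3.4]{khalil2002nonlinear}, in its proof of Theorem~\ref{thm: FTS LaSalle}.

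Your substitution $w=v^{1-\alpha}$ turns the inequality into $D^+w\le -c(1-\alpha)$. After that you need only two elementary facts: the monotonicity lemma for continuous functions with $D^+\le 0$, which you plan to prove via $g(t)-\eta t$, and the Dini chain rule for a $C^1$ map with positive derivative, a one-line mean-value argument. There is no comparison theorem to invoke, no explicit solution to compute, and no bookkeeping at $y=0$, where $-cy^\alpha$ fails to be Lipschitz. Nothing is lost, since $w(t)\le w(0)-c(1-\alpha)t$ is exactly the comparison envelope $V(x(t))\le\bigl[V(x(0))^{1-\alpha}-c(1-\alpha)t\bigr]^{1/(1-\alpha)}$. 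The trick also extends: $w=\int_0^{v}ds/\phi(s)$ turns $D^+v\le-\phi(v)$ into $D^+w\le-1$. This gives the bound $\int_0^{V(x(0))}ds/\phi(s)$ of Theorem~\ref{thm: FTS LaSalle} without any uniqueness requirement on $\dot v=-\phi(v)$.

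One precision point on scope. Reading the conclusion as local FTS is the right call. The paper's definition asks for a finite settling time at every $x(0)\in\mathbb R^n$, which hypotheses imposed only on $\mathcal V$ cannot deliver, since $f$ may vanish outside $\mathcal V$. However, your argument establishes convergence with the stated bound for $x(0)\in B_\delta$, or more generally for any positively invariant neighborhood contained in $\mathcal V$. $\mathcal V$ itself need not be positively invariant, so it is not, by itself, a region-of-attraction estimate.
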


The authors in \cite{polyakov2012nonlinear} presented the following result for fixed-time stability, where the time of convergence does not depend upon the initial condition. 

\begin{Theorem}\hspace{-0.1pt}\cite[Lemma 1]{polyakov2012nonlinear}\label{FxTS TH}
Suppose there exists a positive definite radially unbounded function $V$ for system \eqref{eq: sys} such that 
\begin{align}\label{eq: dot V FxTS old }
    (D^+V)(x) \leq -aV(x)^p-bV(x)^q, \quad \forall x\neq 0,
\end{align}
with $a,b>0$, $0<p<1$ and $q>1$. Then, the origin of \eqref{eq: sys} is FxTS with the settling time function satisfying
\begin{align}\label{eq: T bound old}
    T \leq \frac{1}{a(1-p)} + \frac{1}{b(q-1)}. 
\end{align}
\end{Theorem}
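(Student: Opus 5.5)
We prove the statement of Theorem~\ref{FxTS TH} with a scalar comparison argument along solutions, followed by an estimate of how long $V(x(t))$ spends above and below the level $1$. Fix $x(0)\neq 0$ and set $v(t)=V(x(t))$. Because $V$ is positive definite and radially unbounded, and \eqref{eq: dot V FxTS old } gives $D^+v\le 0$ whenever $x\neq 0$, the function $v$ is nonincreasing. Hence the solution stays in the compact sublevel set $\{V\le V(x(0))\}$. Lyapunov stability of the origin follows in the standard way: for $\epsilon>0$, pick $\delta$ so that $\{\|x\|<\delta\}\subset\{V<\min_{\|y\|=\epsilon}V(y)\}$.

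For convergence, we will use the upper Dini derivative inequality $D^+v(t)\le -a v^p-b v^q$ for all $t$ with $v(t)>0$. The key tool is the comparison lemma for Dini derivatives. Define $\Phi(s)=\int_s^{v(0)}\frac{dr}{ar^p+br^q}$ for $s\in(0,v(0)]$. Since $\Phi$ is decreasing and $C^1$, the composition $\Phi(v(t))$ has lower Dini derivative at least $1$ while $v>0$. Therefore $\Phi(v(t))\ge t$ as long as $v(t)>0$. It follows that $v$ must reach $0$ no later than
\[
T(x(0))\le \int_0^{v(0)}\frac{dr}{ar^p+br^q}\le \int_0^{\infty}\frac{dr}{ar^p+br^q},
\]
provided the last integral is finite. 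After $v$ hits $0$ it stays at $0$ by monotonicity and nonnegativity, so $x(t)=0$ from then on.

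The bound \eqref{eq: T bound old} comes from splitting this integral at $r=1$:
\[
\int_0^1\frac{dr}{ar^p+br^q}\le\int_0^1\frac{dr}{ar^p}=\frac{1}{a(1-p)},
\qquad
\int_1^\infty\frac{dr}{ar^p+br^q}\le\int_1^\infty\frac{dr}{br^q}=\frac{1}{b(q-1)}.
\]
Both are finite because $p<1$ and $q>1$. The bound is independent of $x(0)$, which gives fixed-time stability. Equivalently, one can apply Lemma~\ref{FTS Bhat} on the phase where $v\le1$ and argue directly on the phase where $v>1$.

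The main obstacle is the rigorous comparison step with only $D^+V$ available, since $V$ need not be differentiable and $v$ need not be absolutely continuous a priori. We plan to handle it using the continuity of $v$ together with the standard fact that a continuous function with nonpositive upper Dini derivative is nonincreasing. We apply that fact to $t-\Phi(v(t))$, computing its Dini derivative via the chain rule for a $C^1$ decreasing outer function. A second point to check is that $\Phi(v(t))\ge t$ forces $v$ to vanish by time $\Phi(0^+)$. This holds because $\Phi(0^+)$ is finite and $\Phi$ is bounded on $(0,v(0)]$, so $v$ cannot remain positive beyond time $\Phi(0^+)$.
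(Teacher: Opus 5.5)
Your proof is correct and follows essentially the paper's route. The paper does not prove this statement itself; it cites \cite{polyakov2012nonlinear}. Its closest argument is the proof of Theorem~\ref{thm: FTS LaSalle}, which by the subsequent Remark contains this result as the case $\phi(s)=as^p+bs^q$. That proof uses the same comparison with $\dot v=-\phi(v)$, the same settling-time bound $\int_0^{V(x(0))}dr/\phi(r)$, and the same split of the integral at $r=1$. Your comparison step differs slightly: the paper cites \cite[Lemma 3.4]{khalil2002nonlinear}, while you show directly that $t-\Phi(v(t))$ is nonincreasing using Dini derivatives. This works for merely continuous $V$, as in the statement, and needs neither absolute continuity of $V\circ x$ nor Lipschitz continuity of $\phi$ at $0$.
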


Note that while the above results allow the Lyapunov function to be non-differentiable, they still require the existence of a positive definite function that serves as a Lyapunov function. The purpose of this paper is to further relax this condition by utilizing invariance-like principles. 

\section{New results on FTS and FxTS}\label{sec: new results}
First, we review a variant of LaSalle's invariance principle from \cite{cortes2006finite} for the case of general nonlinear systems of the form \eqref{eq: sys} using a potentially non-smooth Lyapunov-like function. 

\begin{Lemma}\cite[Theorem 3]{cortes2006finite}\label{lemma: LaSalle nonsmooth}
Let $V:\mathbb R^n\to \mathbb R$ be a locally Lipschitz continuous and regular function and $S\subset\mathbb R^n$ be a compact and strongly invariant for \eqref{eq: sys} with $x(0)\in S$. Assume that $\max\tilde L_fV(x)\leq 0$ for all $x\in S$ and let $M = \{x\; |\; 0\in \tilde L_fV(x)\}$. Then, any solution of \eqref{eq: sys} starting from $x(0)$ converges to the largest invariant set $E$ in $M\cap S$. 
\end{Lemma}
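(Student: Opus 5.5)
The natural route is the classical LaSalle argument, adapted to Filippov solutions and to a locally Lipschitz, regular $V$ by means of Lemma~\ref{lemma: 1}. Fix a solution $x(\cdot)$ with $x(0)\in S$ and let $\omega(x_0)$ be its positive limit set. The argument has three steps. First, show that $V(x(t))$ converges. Second, show that $V$ is constant on $\omega(x_0)$. Third, conclude that $\omega(x_0)$ lies in $M\cap S$ and is invariant, hence is contained in the largest invariant set $E$. Convergence of $x(t)$ to $E$ then follows from the usual distance argument.

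\textbf{Step 1.} Since $S$ is strongly invariant, $x(t)\in S$ for all $t\ge 0$. By Lemma~\ref{lemma: 1}, $t\mapsto V(x(t))$ is absolutely continuous and
\[
\tfrac{d}{dt}V(x(t))\le \max\tilde L_fV(x(t))\le 0
\]
almost everywhere. Hence $V(x(t))$ is nonincreasing. It is also bounded below, since $V$ is continuous on the compact set $S$, so $V(x(t))\to c$ for some $c\in\mathbb R$.

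\textbf{Step 2.} Compactness of $S$ makes $\omega(x_0)$ nonempty, compact, contained in $S$, and attracting: $|x(t)|_{\omega(x_0)}\to 0$. Continuity of $V$ then gives $V\equiv c$ on $\omega(x_0)$. We also need $\omega(x_0)$ to be weakly invariant. Since $f$ is continuous and solutions are assumed unique, this follows from continuous dependence of solutions on initial conditions; in the Filippov setting one would instead invoke upper semicontinuity of the solution set, as in \cite{cortes2006finite}. Consequently, for any $y\in\omega(x_0)$ there is a solution through $y$ that stays in $\omega(x_0)$, and along it $V$ is identically $c$.

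\textbf{Step 3.} This is the main obstacle: converting constancy of $V$ along that solution into the pointwise statement $0\in\tilde L_fV(y)$. By Lemma~\ref{lemma: 1}, along the solution $z(\cdot)$ through $y$ we have
\[
0=\tfrac{d}{dt}V(z(t))\in\tilde L_fV(z(t))
\]
for almost every $t$. This yields $z(t)\in M$ for a dense set of times, but not yet at $t=0$. To reach $t=0$, use that $\tilde L_fV$ is upper semicontinuous with closed values. This holds because $\partial V$ is upper semicontinuous with compact convex values and $f$ is continuous. Take $t_k\downarrow 0$ with $0\in\tilde L_fV(z(t_k))$. Then $z(t_k)\to y$, and the closed-graph property gives $0\in\tilde L_fV(y)$. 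Therefore $\omega(x_0)\subset M\cap S$. Being invariant, $\omega(x_0)$ is contained in the largest invariant subset $E$ of $M\cap S$. Since $|x(t)|_{\omega(x_0)}\to 0$, the solution converges to $E$.
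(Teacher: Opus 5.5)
Your proof is correct. The paper gives no proof of its own here; it imports the result from \cite[Theorem 3]{cortes2006finite}. Your first two steps are the standard Bacciotti--Ceragioli argument on which that citation rests: monotone convergence of $V(x(t))$ via Lemma~\ref{lemma: 1}, then constancy of $V$ on the weakly invariant set $\omega(x_0)$, which gives $0\in\tilde L_fV$ almost everywhere along solutions lying in it.

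Your Step~3 goes beyond the cited argument in one place. For Filippov systems with discontinuous right-hand side, the set-valued Lie derivative need not be upper semicontinuous (it may even be empty). The cited theorem therefore only concludes convergence to the largest weakly invariant set in $\overline{M}\cap S$. In this paper $f$ is continuous and $\tilde L_fV(x)=\{\eta^Tf(x):\eta\in\partial V(x)\}$. Local boundedness and upper semicontinuity of $\partial V$, together with continuity of $f$, then give $\tilde L_fV$ a closed graph. Your argument thus shows that $M$ is closed and yields $\omega(x_0)\subset M$ directly. This is exactly what justifies the paper dropping the closure in its restatement.

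You also replace Filippov's upper semicontinuity of solution sets with continuous dependence on initial conditions. That is legitimate here because the paper assumes uniqueness of solutions. The invariance you obtain for $\omega(x_0)$ is forward invariance, which is the sense in which $E$ must be read here; under uniqueness, weak and strong invariance coincide.
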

A set $S\subset\mathbb R^n$ is termed as strongly invariant for \eqref{eq: sys} if for each initial condition $x(0)\in S$, every resulting maximal solution $x(\cdot)$ satisfies $x(t)\in S$ for all $t\geq 0$.  

\begin{Remark}
In Lemma \ref{lemma: LaSalle nonsmooth}, the function $V$ is not necessarily a Lyapunov function since it is not assumed to be positive definite. Furthermore, the result guarantees convergence to the largest invariant set where the derivative of the function $V$ is zero, which may not coincide with the points where the function $V$ itself is zero. This is the crucial difference between invariance-like principles and Lyapunov function-based results. 
\end{Remark}

The requirement of the existence of an invariant set for \eqref{eq: sys} can be fulfilled by assuming the function $V$ is lower-bounded, as $\dot V(x)\leq \max L_fV(x)\leq 0$, wherever $\dot V$ exists, implies that the level-sets of the function $V$ are invariant. The following corollary can be immediately stated based on these assumptions. 

\begin{Corollary}\label{cor: immediate extension}
    Let $V:\mathbb R^n\to \mathbb R$ be a locally Lipschitz continuous and regular function with compact sublevel sets, i.e., $\{x\; |\; V(x)\leq c\}$ is compact for $c\geq 0$. Assume that $\max\tilde L_fV(x)\leq 0$ for all $x$ and let $M = \{x\in \mathbb R^n \; |\; 0\in \tilde L_fV(x)\}$. Then, for each $x(0)$, there exists $S\subset\mathbb R^n$ that is invariant for \eqref{eq: sys} and any solution of \eqref{eq: sys} starting from $x(0)$ converges to the largest invariant set $E$ in $M\cap S$. 
\end{Corollary}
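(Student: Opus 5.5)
The plan is to reduce Corollary \ref{cor: immediate extension} to Lemma \ref{lemma: LaSalle nonsmooth} by choosing $S$ to be the sublevel set of $V$ through the initial condition. Fix $x(0)\in\mathbb R^n$, set $c_0 = V(x(0))$, and define
\begin{align*}
S = \{x\in\mathbb R^n \;|\; V(x)\leq c_0\}.
\end{align*}
By hypothesis $S$ is compact. If $c_0<0$ is possible, we replace $c_0$ by $\max\{c_0,0\}$: this gives a larger sublevel set that is still compact and still contains $x(0)$. Once $S$ is shown to be strongly invariant, every hypothesis of Lemma \ref{lemma: LaSalle nonsmooth} holds. Indeed, $V$ is locally Lipschitz and regular, and $\max\tilde L_fV(x)\leq 0$ holds on all of $\mathbb R^n$, hence on $S$. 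The lemma then gives convergence of any solution from $x(0)$ to the largest invariant set $E$ in $M\cap S$.

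The only real step is strong invariance of $S$, and it uses Lemma \ref{lemma: 1}. Let $x(\cdot)$ be any maximal solution with $x(0)\in S$. Lemma \ref{lemma: 1} says $t\mapsto V(x(t))$ is absolutely continuous on every compact subinterval of its domain. It also gives
\begin{align*}
\tfrac{d}{dt}V(x(t))\leq \max\tilde L_fV(x(t))\leq 0
\end{align*}
for almost every $t$. Since an absolutely continuous function is the integral of its derivative, $V(x(t))\leq V(x(0))\leq c_0$ for every $t$ in the domain. Hence $x(t)\in S$ throughout.

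The point needing care is that the solution exists on all of $[0,\infty)$, because Lemma \ref{lemma: LaSalle nonsmooth} concerns solutions defined for all $t\geq0$. This is assumed for \eqref{eq: sys} in the paper. Even without that assumption, the trajectory stays in the compact set $S$, so it cannot escape in finite time, and the standard continuation argument for Filippov solutions extends it to $[0,\infty)$.

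Nothing else is needed; the conclusion is exactly the one stated, with $S$ depending on $x(0)$ through the level $c_0$.
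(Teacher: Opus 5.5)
Your proposal is correct and follows the paper's route: take $S=\{x \mid V(x)\le V(x(0))\}$, get its invariance from $\frac{d}{dt}V(x(t))\le\max\tilde L_fV(x(t))\le 0$, and invoke Lemma \ref{lemma: LaSalle nonsmooth}. You also fill in details the paper leaves implicit: absolute continuity via Lemma \ref{lemma: 1}, raising the level to $\max\{c_0,0\}$ to match the compactness hypothesis, and global existence of solutions.
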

Another immediate extension is possible when the domain of attraction (see, e.g., \cite[pp. 122]{khalil2002nonlinear}) of an equilibrium point is known, as illustrated in the following result. 

\begin{Corollary}\label{cor: local AS with DoA}
    Assume that $x = 0$ is AS for \eqref{eq: sys} with $D\subset\mathbb R^n$ its domain of attraction. Let $V:\mathbb R^n\to \mathbb R$ be a locally Lipschitz continuous and regular function. Assume that $\max\tilde L_fV(x)\leq 0$ for all $x\in D$ and let $M = \{x\in D\; |\; 0\in \tilde L_fV(x)\}$. Then, for each $x(0)\in D$, there exists $S\subset D$ that is invariant for \eqref{eq: sys} and any solution of \eqref{eq: sys} starting from $x(0)$ converges to the largest invariant set $E$ in $M\cap S$. 
\end{Corollary}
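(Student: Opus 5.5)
The plan is to reduce Corollary \ref{cor: local AS with DoA} to Lemma \ref{lemma: LaSalle nonsmooth}. For each $x(0)\in D$ we will construct a compact set $S\subset D$ that contains $x(0)$ and is strongly invariant for \eqref{eq: sys}. The natural candidate is the closure of the forward orbit,
\[
S = \overline{\{x(t)\; |\; t\geq 0\}},
\]
where $x(\cdot)$ is the (unique, by standing assumption) solution from $x(0)$.

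First we show that $S$ is compact. Since $x(0)\in D$, the solution satisfies $x(t)\to 0$ as $t\to\infty$. Hence there is $T$ with $\|x(t)\|\leq 1$ for $t\geq T$. On $[0,T]$ the trajectory is the continuous image of a compact interval, so it is bounded. The orbit is therefore bounded, and its closure $S$ is compact.

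Next we show that $S\subset D$. Orbit points lie in $D$, because the domain of attraction is invariant: any point $x(t)$ lies on a solution that converges to $0$. The only limit point of the orbit not on the orbit itself is the origin (if it is not already attained), and $0\in D$ trivially. So $S$ equals the orbit union $\{0\}$, which lies in $D$.

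Then we show strong invariance. By uniqueness of solutions, the solution from any $x(s)$ is $t\mapsto x(s+t)$, which stays in the orbit. The solution from $0$ is identically $0$ because $f(0)=0$ and solutions are unique.

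With $S$ in hand, the hypothesis $\max\tilde L_fV\leq 0$ on $D$ gives the same inequality on $S$. Lemma \ref{lemma: LaSalle nonsmooth} then applies directly with $M\cap S$, where $M$ is restricted to $D$. This is harmless since $S\subset D$. The conclusion is convergence to the largest invariant set $E\subset M\cap S$.

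The main subtlety is the strong-invariance requirement in Lemma \ref{lemma: LaSalle nonsmooth}. That lemma is phrased for Filippov solutions, which could in principle be non-unique. If uniqueness of Filippov solutions is not granted, the orbit closure may not be strongly invariant. In that case, the first fallback is to use Lyapunov stability of the origin together with local uniform attractivity to get a compact invariant neighborhood. The second fallback is to take $S$ as the closure of the reachable set from $x(0)$ and argue compactness via upper semicontinuity of the Filippov map. Under the paper's standing uniqueness assumption, however, the orbit-closure argument suffices.
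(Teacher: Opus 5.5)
Your proof is correct and follows the route the paper intends. The paper gives no explicit argument, only calling this an immediate extension of Lemma~\ref{lemma: LaSalle nonsmooth} in which the known domain of attraction furnishes $S$; your choice $S=\{x(t)\mid t\geq 0\}\cup\{0\}$ supplies the compactness and strong invariance that lemma requires, which $D$ itself, being open and possibly all of $\mathbb R^n$, would not. As a side remark, $f(0)=0$ gives $\tilde L_fV(0)=\{0\}$, so $0\in M\cap S$ with $\{0\}$ invariant; hence $\{0\}\subset E$, and the conclusion already follows from $x(t)\to 0$ without invoking the lemma at all.
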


\begin{Remark}
    Corollary \ref{cor: immediate extension} and \ref{cor: local AS with DoA} provide a method of constructing the invariant set $S$ in Lemma \ref{lemma: LaSalle nonsmooth} under the assumptions of compact sub-level sets of the function $V$ and a known domain of attraction. We use these particular cases in the main results of the paper presented next. 
\end{Remark}

For FTS, the authors in \cite{cortes2006finite} assume that the continuous Lyapunov-like function $V$ is bounded from below over the compact set $S$ and satisfies $\max \tilde L_fV(x)<-\epsilon$ for some $\epsilon>0$ for all $x\in S\setminus M$, guaranteeing existence of a finite time $T$ for any initial condition $x(0)\in S$ such that $\lim_{t \uparrow T}x(t)\in M$. 
We propose a new condition that provides a natural way of obtaining conditions for FxTS as well. 

\begin{Theorem}\label{thm: FTS LaSalle}
Assume 
that the solutions of \eqref{eq: sys} are uniquely determined for all initial conditions. Let $V:\mathbb R^n\to \mathbb R$ be a Lipschitz continuous and regular function, whose sublevel sets are compact,   
and let $M = \{x \in \mathbb R^n \; |\; 0\in \tilde L_fV(x)\}$. 
Consider a monotonically increasing continuous scalar function $\phi:\mathbb R\to\mathbb R$ satisfying $r\phi(r)> 0$ for all $r\neq 0$, $\phi(0) = 0$, and
\begin{align*}
    \int_{0}^{V_0}\frac{dV}{\phi(V)}<\infty \quad \forall V_0\in \mathbb R.
\end{align*}
If $\max \tilde L_fV(x)\leq -\phi(V(x))$ for all $x\notin M $, then 
for each $x(0)$ such that $V(x(0))>0$, there exists $S\subset\mathbb R^n$ that is invariant for \eqref{eq: sys} and the solutions of \eqref{eq: sys} starting from $x(0)$ converge to the largest invariance set $E\subset M\cap S$ within a finite time upper-bounded by $T(x(0))\leq \int_{0}^{V(x(0))}\frac{dV}{\phi(V)}$. 
Furthermore, if $V$ is radially unbounded w.r.t. $M$ and $\sup_{V_0\in \mathbb R}\int_{0}^{V_0}\frac{dV}{\phi(V)}=T_M<\infty$ for some $T_M>0$, then 
the settling-time function is upper-bounded by $T_M$ for all $x(0)$. 
\end{Theorem}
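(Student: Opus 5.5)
The plan is to combine the invariance statement of Corollary \ref{cor: immediate extension} with a comparison argument along solutions, using Lemma \ref{lemma: 1} to handle the non-smooth $V$.

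\textbf{Step 1: invariant set and convergence.} Fix $x(0)$ with $V(x(0))>0$ and take $S=\{x\;|\;V(x)\leq V(x(0))\}$, which is compact by hypothesis. On $\mathbb R^n\setminus M$ we have $\max\tilde L_fV(x)\leq-\phi(V(x))$. On $M$, the condition $0\in\tilde L_fV(x)$ does not by itself give $\max\tilde L_fV\leq 0$. Since the solutions are unique and the statement concerns only the time until the solution reaches $M$, I would argue only on the interval during which $x(t)\notin M$. On that interval, Lemma \ref{lemma: 1} gives, for almost every $t$,
\begin{align*}
\frac{d}{dt}V(x(t))\leq\max\tilde L_fV(x(t))\leq-\phi(V(x(t))).
\end{align*}
While $V(x(t))>0$ the right-hand side is negative, because $r\phi(r)>0$ for $r\neq 0$. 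Hence $V\circ x$ is nonincreasing, the solution stays in $S$, and $S$ is invariant up to the hitting time. Applying Corollary \ref{cor: immediate extension}, or Lemma \ref{lemma: LaSalle nonsmooth} with this $S$, then yields convergence to the largest invariant set $E\subset M\cap S$.

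\textbf{Step 2: finite-time bound.} Let $v(t)=V(x(t))$, which is absolutely continuous by Lemma \ref{lemma: 1}. Define $G(v)=\int_0^{v}\frac{ds}{\phi(s)}$; it is finite by hypothesis and increasing for $v>0$. Because $G$ is locally Lipschitz on $(0,\infty)$, the chain rule for absolutely continuous functions gives $\frac{d}{dt}G(v(t))=\dot v/\phi(v)\leq -1$ almost everywhere, as long as $v(t)>0$ and $x(t)\notin M$. Integrating,
\begin{align*}
G(v(t))\leq G(V(x(0)))-t .
\end{align*}
Since $G\geq 0$, one of two things must happen by time $T(x(0))=G(V(x(0)))=\int_0^{V(x(0))}\frac{dV}{\phi(V)}$: either $x(t)$ enters $M$, or $v$ reaches $0$. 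I expect the main obstacle to be this second alternative, where $V$ reaches zero outside $M$. There $\max\tilde L_fV\leq-\phi(0)=0$ is still compatible with further decrease, so the argument must also cover $V<0$. I would try to rule this out by noting that for $V<0$ we have $-\phi(V)>0$, so the inequality carries no useful information. The cleanest route is to read the theorem as asserting that the trajectory reaches $\{V\le 0\}\cup M$, and then to show that this set is contained in $M$ on the relevant set $E$ via the LaSalle conclusion of Step 1. Failing that, I would state the bound in terms of the first time $x(t)$ enters $M$, and use that up to this time $v>0$ is forced by the differential inequality in the comparison argument.

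\textbf{Step 3: fixed-time bound.} If $V$ is radially unbounded with respect to $M$, then every sublevel set is compact relative to $M$, so Step 1 applies to every initial condition. The bound from Step 2 gives $T(x(0))\leq G(V(x(0)))\leq\sup_{V_0}G(V_0)=T_M$, which is uniform in $x(0)$.
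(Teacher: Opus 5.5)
Your Step 2 is correct as far as it goes. Integrating $G(v)=\int_0^v ds/\phi(s)$ directly is cleaner than the comparison lemma the paper uses, which nominally needs a Lipschitz right-hand side, and $\phi$ is typically not Lipschitz at $0$. It shows that by time $G(V(x(0)))$ the trajectory either enters $M$ or reaches $\{V=0\}$. The genuine gap is everything after this dichotomy, and neither of your proposed exits works.

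Your claim that $v>0$ is ``forced by the differential inequality'' until $x$ enters $M$ is false. At a point $x\notin M$ with $V(x)=0$, the hypothesis gives $\max\tilde L_fV(x)\le-\phi(0)=0$, and $0\notin\tilde L_fV(x)$. So $\max\tilde L_fV(x)<0$ and $V$ passes strictly below zero, after which $-\phi(V)>0$ and the hypothesis says nothing.

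The LaSalle route cannot repair this. Lemma~\ref{lemma: LaSalle nonsmooth} and Corollary~\ref{cor: immediate extension} give only asymptotic convergence. They also need $\max\tilde L_fV\le 0$ on all of $S$, which, as you noted yourself, is not available on $M$ or on $\{V<0\}$. Moreover, $\{V\le 0\}$ need not lie in $M$. Even your first branch is incomplete: entering $M$ is not the same as reaching the invariant set $E$, since a trajectory can pass through $M\setminus E$.

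This gap cannot be closed, because the statement is false as written. Take $n=1$, $f(x)=-x$, $V(x)=|x|-1$ (globally Lipschitz, convex hence regular, with compact sublevel sets), and $\phi(r)=\mathrm{sign}(r)\sqrt{|r|}$.
\begin{itemize}
\item $\tilde L_fV(x)=\{-|x|\}$ for $x\neq 0$ and $\tilde L_fV(0)=\{0\}$, so $M=\{0\}$.
\item The hypothesis $-|x|\le-\phi(|x|-1)$ holds for every $x\neq 0$: for $|x|\ge 1$ because $|x|-1\le x^2$, and for $|x|<1$ because the right-hand side is positive.
\item $\int_0^{V_0}dV/\phi(V)=2\sqrt{|V_0|}<\infty$.
\end{itemize}
From $x(0)=2$ the theorem predicts arrival in $E=\{0\}$ by $t=2$. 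Yet $x(t)=2e^{-t}$ never reaches $0$. Here $V$ crosses zero outside $M$ at $t=\ln 2$, which is exactly your problematic alternative.

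The paper's proof has the same hole. After the comparison step it simply asserts that $V(x(t))\to 0$, together with $\phi(r)\neq 0$ for $r\neq 0$, places the trajectory in $E$ in finite time. Its opening claim $\max\tilde L_fV\le 0$ is likewise unsupported on $M$ and on $\{V<0\}$.

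An extra hypothesis is needed. For instance, if $V\ge 0$, every zero of $V$ is a global minimizer, so $0\in\partial V(x)$ and hence $x\in M$; this collapses your second alternative into the first. If moreover $M=\{V=0\}$, then once $V$ hits zero it cannot become positive again, since on any interval where $V>0$ one has $x\notin M$ and hence strict decrease. The trajectory then enters the forward-invariant set $\{V=0\}\subseteq M\cap S$ by time $G(V(x(0)))\le T_M$ and stays there, which is what your Step 2 and Step 3 would deliver.
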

\begin{proof}
    Using monotonicity of the function $\phi$, we obtain that $\max \tilde L_fV(x)\leq 0$. Hence, from Corollary \ref{cor: immediate extension}, it follows that the trajectories of the system \eqref{eq: sys} converge to the set $E$.

    Next, we prove that this convergence happens within a finite time. Given the assumptions on the function $V$, per Lemma \ref{lemma: 1}, we have that $V\circ x$ is absolutely continuous, and hence, it holds that
    \begin{align*}
        \frac{d}{dt}V(x(t))\leq \max \tilde L_fV(x(t)) \leq -\phi(V(x(t))\leq 0,
    \end{align*}
    almost everywhere for $t$ such that $V(x(t))>0$. Hence, the sublevel set $S = \{x\; |\; V(x)\leq V(x(0))\}$ is invariant. 
    Using the comparison lemma (see, e.g., \cite[Lemma 3.4]{khalil2002nonlinear}), we have that
    \begin{align*}
        V(x(t))\leq v(t), \quad t\geq 0,
    \end{align*}
    where $v:\mathbb R\to \mathbb R$ is the unique solution of:
    \begin{align*}
        \dot v = -\phi(v), \quad v(0) = V(x(0)).
    \end{align*}
    Let $V_0 = V(x(0))$. Using the monotonicity of the function $\phi$, it holds that $\phi(v(t))\geq \phi(V(x(t)))>0$ for all $t\geq 0$ such that $V(x(t))>0$. Hence, along the solution $v(\cdot)$, we have:
    \begin{align*}
        & \int_{v(0)}^{v(t)}\frac{dv}{\phi(v)} = -t 
    \end{align*}
    The integral on the LHS above computes the time it takes for the solution $v(\cdot)$ to reach a point $v(t)$ starting from $v(0)$.  
    Consequently, the settling-time function, i.e., the time required for a solution to reach the origin for a given initial condition $v(0)$, is given as
    \begin{align*}
        \mathcal T(v(0)) & \coloneqq  \int_{0}^{v(0)}\frac{dv}{\phi(v)}=\int_{0}^{V_0}\frac{dV}{\phi(V)}<\infty,
    \end{align*}
    where we used the fact that the function $\mathcal T$ is bounded for $V_0 = V(x(0))\in \mathbb R$ per the assumption of the theorem. Hence, the solutions $v(\cdot)$ reach the value 0 within a finite time for any $v(0)\in \mathbb R$. Since $V(x(t))\leq v(t)$ for all $t\geq 0$, it holds that $\lim_{t \uparrow \mathcal T(V(x(0)))}V(x(t))= 0$. Since $\phi(r)\neq 0$ for $r\neq 0$ and $\max \tilde L_fV(x)\leq -\phi(V(x))$, it holds that the trajectories reach the largest invariant set $E\subset M\cap S$ within a finite time.  
    Fixed-time convergence follows when $\int_{0}^{V(x(0))}\frac{dv}{\phi(V)}$ is bounded by $T_M$ for all $V(x(0))\in \mathbb R$. 
\end{proof}

One of the benefits of the conditions of Theorem \ref{thm: FTS LaSalle} is that they allow higher flexibility in its application for proving finite-time (or fixed-time) convergence. The condition $\max \tilde{L}_fV(x)\leq -\epsilon$ is generally satisfied for \textit{normalized} dynamics where the norm of the derivative of the system state, $\|\dot x\|$, is constant. Such dynamics find applications in the context of convex optimization through normalized gradient flow (see \cite[Section 3]{cortes2006finite}) or in the context of control design through sliding-mode methods (see, e.g., \cite{song2016finite}). However, such normalized methods lead to \textit{chattering} behavior due to the discontinuity of the dynamics at the equilibrium point. The conditions in Theorem \ref{thm: FTS LaSalle} allow for continuous dynamics, enabling its application to a broader range of problems without leading to undesirable chattering behavior. 

\begin{Remark}
Note that the derivative conditions for FTS in \cite{bhat2000finite} become special case of Theorem \ref{thm: FTS LaSalle} with $\phi(s) = as^p$ with $a>$ and $0<p<1$, and those for FxTS in \cite{polyakov2012nonlinear} become special case with $\phi(s) = as^p+bs^q$ where $a, b>0$, $0<p<1$ and $q>1$. Remark 4 in \cite{garg2020fixed} explains how these two terms result in accelerated convergence and ultimately, FxTS. The main insight required to construct such a function is as follows. The integral $\int_0^{s_0}\frac{ds}{\phi(s)}$ is generally decomposed into two components: $\int_0^{1}\frac{ds}{\phi(s)} + \int_1^{s_0}\frac{ds}{\phi(s)}$. The first component is bounded when $\frac{\phi(s)}{s}\to \infty$ as $s\to 0$ and the second component is bounded when $\frac{\phi(s)}{s}\to \infty$ as $s\to \infty$. These properties can help choose the appropriate function $\phi$. 
\end{Remark}

Next, we present the extension of \cite[Theorem 5]{cortes2006finite}, using the derivative information of the system dynamics $f$. 
\begin{Theorem}\label{thm: FxTS LaSalle second order}
    Assume that $f$ in \eqref{eq: sys} is Lipschitz continuous over $\mathbb R^n\setminus \{0\}$. Let $V:\mathbb R^n\to \mathbb R$ be a continuously differentiable function with compact sublevel sets that is bounded from below with a Lipschitz gradient such that its Lie derivative along the system trajectories of \eqref{eq: sys} satisfies $L_fV(x)\leq 0$ for all $x$ and let $M = \{x\; |\; L_fV(x) = 0\}$. Then, for each $x(0)\in \mathbb R^n$, there exists $S\subset\mathbb R^n$ that is invariant for \eqref{eq: sys}. Further assume that $\min \tilde L_f(L_fV)(x)\geq \phi(-L_fV(x))$ for all $x\in S\setminus M$, where $\phi:\mathbb R_+\to\mathbb R_+$ is the same as in Theorem \ref{thm: FTS LaSalle}. Then, the solutions of \eqref{eq: sys} starting from any $x(0)\in \mathbb R^n$ converge to the largest invariant set $E\subset M\cap S$ within a finite time upper-bounded by $T\leq \int_0^{- L_fV(x(0))} \frac{dv}{\phi(v)}$. Furthermore, if $\sup_{r_0\in \mathbb R_+}\int_0^{r_0} \frac{dr}{\phi(r)}\leq T_M<\infty$, then the trajectories of \eqref{eq: sys} reach the set $E$ within a fixed time upper-bounded by $T_M$ for all $x(0)\in \mathbb R^n$. 
\end{Theorem}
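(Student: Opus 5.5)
The plan is to apply the argument of Theorem \ref{thm: FTS LaSalle} to the scalar quantity $W(t) \coloneqq -L_fV(x(t))\ge 0$ instead of $V(x(t))$. We use a second-order comparison in place of the first-order one: the hypothesis $\min\tilde L_f(L_fV)\ge\phi(-L_fV)$ says that $L_fV$ increases toward $0$ at a rate governed by $\phi$.

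First, the invariant set. Because $L_fV\le 0$ everywhere and $V$ is $C^1$, $V(x(t))$ is nonincreasing along solutions. The sublevel set $S=\{x\mid V(x)\le V(x(0))\}$ is therefore compact and forward invariant, which is exactly Corollary \ref{cor: immediate extension}. By Lemma \ref{lemma: LaSalle nonsmooth}, applied with $\tilde L_fV=\{L_fV\}$, the solution converges to the largest invariant set $E\subset M\cap S$.

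Second, the regularity needed for Lemma \ref{lemma: 2}. The map $L_fV(x)=\nabla V(x)^Tf(x)$ is a product of Lipschitz maps on the compact set $S\setminus\{0\}$. Here $\nabla V$ is Lipschitz and $f$ is locally Lipschitz away from the origin, while $f$ is continuous at $0$ with $f(0)=0$. Hence $L_fV$ is single-valued and locally Lipschitz on $S\setminus\{0\}$. I would assume, or argue via its $C^1$-type structure, that it is regular, so that Lemma \ref{lemma: 2} applies. Then $t\mapsto L_fV(x(t))$ is absolutely continuous, and for a.e.\ $t$ with $x(t)\notin M$,
\begin{align*}
\dot W(t) = -\tfrac{d}{dt}L_fV(x(t)) \le -\min\tilde L_f(L_fV)(x(t)) \le -\phi(W(t)).
\end{align*}
The origin, where Lipschitzness of $f$ may fail, lies in $M$ because $f(0)=0$, so it does not obstruct the argument on $S\setminus M$.

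Third, the comparison argument. I compare $W$ with the solution $w$ of $\dot w=-\phi(w)$, $w(0)=-L_fV(x(0))$, via the comparison lemma, exactly as in Theorem \ref{thm: FTS LaSalle}. This gives $W(t)\le w(t)$, and $w$ reaches $0$ at time $\int_0^{w(0)}dr/\phi(r)<\infty$. So $L_fV(x(t))\to 0$ within that time, that is, $x(t)$ reaches $M$. Once $W=0$, the inequality $\dot W\le 0$ together with $W\ge0$ keeps $W\equiv 0$, so the trajectory stays in $M\cap S$. A trajectory that remains in $M\cap S$ must lie in the largest invariant subset $E$, using uniqueness of solutions and the first step. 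The fixed-time claim then follows immediately from the uniform bound $\sup_{r_0}\int_0^{r_0}dr/\phi(r)\le T_M$.

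The main obstacle is the second step. Lemma \ref{lemma: 2} needs $\tilde L_fV$ to be Lipschitz and regular, but $f$ is only assumed Lipschitz away from $0$, and regularity of $\nabla V^Tf$ is not automatic. The cleanest route I would try is to apply the lemma only on time intervals where $x(t)\in S\setminus M$, which stay away from $0$, and patch the intervals together using absolute continuity of $W$. A secondary subtlety is justifying that reaching $M$ with $W$ pinned at $0$ means reaching $E$ and not merely $M$.
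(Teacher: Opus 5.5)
This is essentially the paper's proof. The paper also takes $S=\{x\mid V(x)\le V(x(0))\}$, sets $g(t)=-L_fV(x(t))$, invokes Lemma~\ref{lemma: 2} to get $\dot g\le-\phi(g)$, and reuses the comparison argument of Theorem~\ref{thm: FTS LaSalle}; your extra steps (working on time intervals that avoid the origin, and showing $W$ stays at $0$ so the forward orbit lies in $E$) fill in details the paper passes over. The regularity you leave as an assumption, which the paper simply asserts, is not needed: for $h=L_fV$ locally Lipschitz and a $C^1$ solution, Clarke's generalized directional derivative $h^\circ$ gives $\frac{d}{dt}h(x(t))\ge -h^\circ(x(t);-f(x(t)))=\min_{\zeta\in\partial h(x(t))}\zeta^Tf(x(t))$ at a.e.\ $t$, which is exactly the one-sided bound you use.
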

\begin{proof}
    Since $V$ is continuously differentiable with Lipschitz gradient and the function $f$ is Lipschitz, it holds that a unique solution of \eqref{eq: sys} exists and the function $\tilde L_fV$ is Lipschitz, singleton, and regular and is given as $\tilde L_fV = L_fV$. Since $L_fV(x)\leq 0$, it holds that $S = \{x\; |\; V(x)\leq V(x(0))\}$ is invariant for \eqref{eq: sys}. 
    Hence, per Lemma \ref{lemma: 2}, it holds $\frac{d^2}{dt^2}V(x(t))\in \tilde L_f(\dot V(x(t)))$. Using the arguments from the proof of \cite[Theorem 5]{cortes2006finite}, defining $g(t) = -L_fV(x(t))$, we obtain that
    \begin{align*}
        \frac{d}{dt}g(t) \leq -\phi(-L_fV(x(t))) = -\phi(g(t)). 
    \end{align*}
    where $g$ is lower bounded by $0$ for all $t$ such that $x(t)\notin M$ and is upper bounded for all $t\geq 0$ since $L_fV(x)$ is continuous and hence, bounded in any compact domain $S\subset \mathbb R^n$. Using the same steps as in the proof of Theorem \ref{thm: FTS LaSalle}, we obtain that $g(t)\to 0$ as $t\to T$ where $T\leq \int_0^{g(0)}\frac{dv}{\phi(v)}$. With $g(0) = -\tilde L_fV(x(0))$ and the fact that the bound on $T$ is finite, we have that $\tilde L_fV(x(t))\to 0$ as $t\to T$, implying the solutions $x(\cdot)$ reach the largest invariant set $E\subset M\cap S$ within a finite time. The fixed-time convergence follows when the integral $\int_0^{g(0)}\frac{dv}{\phi(v)}$, denoting the settling-time, is bounded for all $g(0)\in \mathbb R_+$. 
\end{proof}

\section{Stability analysis for modified dynamics}\label{sec: AS FTS equivalence}
Now that we have established conditions for FTS and FxTS using LaSalle's invariance principle-like results, inspired by the modified gradient flow dynamics in \cite{garg2020fixed}, consider the modified system where the dynamics in \eqref{eq: sys} is normalized by powers of the norm of the function $f$ in the following manner to obtain the following modified dynamical system:
\begin{align}\label{eq: norm sys}
    \dot x = \begin{cases}
        \left(\frac{1}{\|f(x)\|^{p}}+\frac{1}{\|f(x)\|^{q}}\right)f(x) &  x\neq 0,\\
        0 & x = 0,
    \end{cases}
\end{align}
where $f:\mathbb R^n\to \mathbb R^n$ is continuously differentiable with $f(0) = 0$ and $f(x)\neq 0$ for all $x\neq 0$,  $0<p<1$ and $q<0$. 
Note that the above dynamics is well-defined and continuous for all $x\in \mathbb R^n$ when $f$ is continuous over $\mathbb R^n$ and $f(x) = 0 \iff x = 0$. Furthermore, using \cite[Proposition 1]{garg2022fixed}, it can be shown that the solutions of \eqref{eq: norm sys} are uniquely determined for all $t\geq 0$. We now state the following result, which establishes the FxTS of the origin \eqref{eq: norm sys} when the origin for \eqref{eq: sys} is GAS with similar assumptions as in \cite[Theorem 8]{cortes2006finite}. 


\begin{Theorem}\label{thm: FxTS second order}
    Assume that $\lim_{x\to \infty}\|f(x)\| = \infty$ and that either of the following conditions holds for the matrix function $H:\mathbb R^n\to \mathbb R^{n\times n}$ defined as $H(x) = -\left(\nabla f(x) + \nabla f(x)^T\right)$:
    \begin{itemize}
        \item[(i)] The matrix $H(x)$ is positive definite with its minimum eigenvalue $\lambda_0>0$ for all $x\neq 0$; or
        \item[(ii)] The matrix $H$ is positive semidefinite for all $x\in \mathbb R^n\setminus \{0\}$, the multiplicity of the eigenvalue $0$ is constant over $\mathbb R^n$ and for all $x\in \mathbb R^n$, the vector $f(x)$ is orthogonal to the eigenspace of $H(x)$ corresponding to the eigenvalue $0$, and the second smallest eigenvalue of $H$ is positive for all $x\in \mathbb R^n$, i.e., there exists $\lambda_0>0$ such that $\inf\limits_{x}\lambda_2(H(x)) = \lambda_0$.
    \end{itemize}
     Then, the origin for \eqref{eq: sys} is GAS and for \eqref{eq: norm sys} is FxTS. 
\end{Theorem}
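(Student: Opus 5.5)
The plan is to use the Lyapunov-like function $V(x)=\tfrac12\|f(x)\|^2$, following \cite[Theorem 8]{cortes2006finite}, and to apply Theorem \ref{thm: FTS LaSalle} to \eqref{eq: norm sys}, with $\phi$ of the two-power type from the Remark after that theorem. The function $V$ is continuously differentiable, with $\nabla V(x)=\nabla f(x)^Tf(x)$. It is radially unbounded because $\|f(x)\|\to\infty$ as $\|x\|\to\infty$. It vanishes only at $x=0$ because $f(x)=0\iff x=0$. Hence its sublevel sets are compact and it is positive definite.

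\textbf{GAS of \eqref{eq: sys}.} We compute
\[
L_fV(x)=f(x)^T\nabla f(x)f(x)=\tfrac12 f(x)^T(\nabla f+\nabla f^T)f(x)=-\tfrac12 f(x)^TH(x)f(x).
\]
In case (i), this gives $L_fV(x)\le -\tfrac{\lambda_0}{2}\|f(x)\|^2=-\lambda_0V(x)<0$ for $x\neq 0$.

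In case (ii), $f(x)$ is orthogonal to $\ker H(x)$. Expanding $f(x)$ in the eigenbasis of $H(x)$, only eigenvalues at least $\lambda_2(H(x))\ge\lambda_0$ appear, so the same bound $L_fV\le -\lambda_0 V$ holds. The constant multiplicity of the zero eigenvalue is what I expect is needed to make this decomposition well behaved, but the pointwise inequality is all that is used.

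In both cases $V$ is a radially unbounded strict Lyapunov function, so the origin of \eqref{eq: sys} is GAS. Equivalently, Corollary \ref{cor: immediate extension} applies with $M=\{0\}$.

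\textbf{FxTS of \eqref{eq: norm sys}.} Write $g(x)=\|f(x)\|^{-p}+\|f(x)\|^{-q}>0$ for $x\neq0$. The right-hand side of \eqref{eq: norm sys} is then $g f$, so
\[
L_{gf}V(x)=g(x)L_fV(x)\le -\lambda_0 V(x)\big(\|f\|^{-p}+\|f\|^{-q}\big).
\]
Substituting $\|f\|=(2V)^{1/2}$ gives
\[
L_{gf}V\le -\lambda_0\Big(2^{-p/2}V^{1-p/2}+2^{-q/2}V^{1-q/2}\Big)=:-\phi(V).
\]
The exponents satisfy $1-p/2\in(1/2,1)$ and $1-q/2>1$, the latter since $q<0$. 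Thus $\phi(s)=as^{\alpha}+bs^{\beta}$ with $\alpha<1<\beta$ and $a,b>0$. This $\phi$ is continuous, increasing, satisfies $\phi(0)=0$, and has
\[
\sup_{V_0}\int_0^{V_0}\frac{ds}{\phi(s)}\le \frac{1}{a(1-\alpha)}+\frac{1}{b(\beta-1)}=T_M.
\]

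Here $M=\{x: L_{gf}V=0\}=\{0\}$, and $V$ is radially unbounded. Uniqueness of solutions is given by \cite[Proposition 1]{garg2022fixed}. The fixed-time clause of Theorem \ref{thm: FTS LaSalle} (or directly Theorem \ref{FxTS TH}) then gives convergence to $E=\{0\}$ within time $T_M$ from every initial condition. Lyapunov stability follows from $V$ being positive definite and nonincreasing along solutions. Together these give FxTS.

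\textbf{Expected obstacle.} The main care point is the regularity of the modified vector field at the origin. Since $p<1$, the field $gf$ is continuous there but not Lipschitz, and $V$ is only used away from $0$. The derivative bound is needed only for $x\neq0$, which suffices for Theorem \ref{FxTS TH}. The bound in case (ii) also requires checking the eigen-decomposition argument carefully.
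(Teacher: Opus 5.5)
Your proposal is correct and follows essentially the same argument as the paper. Both use the Lyapunov function $\|f(x)\|^2$ (your factor $\tfrac12$ only rescales the constants in the settling-time bound) and the eigenvalue estimate $f(x)^TH(x)f(x)\ge\lambda_0\|f(x)\|^2$ under either hypothesis, giving $\dot V\le-\lambda_0 V$ for GAS and the two-power inequality $\dot V\le -aV^{\alpha}-bV^{\beta}$ with $\alpha<1<\beta$ for the scaled system, which is then closed by Polyakov's fixed-time lemma. Of your two options, invoking Theorem~\ref{FxTS TH} directly, as the paper does, is the cleaner one.
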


\begin{proof}
    Consider a candidate Lyapunov function $V:\mathbb R^n\to \mathbb R$ as $V(x) = \|f(x)\|^2$ so that $V(x)>0$ for all $x\neq 0$ and $V(0) = 0$. Since $f$ is continuously differentiable, $V$ is continuously differentiable and its derivative along the trajectories of \eqref{eq: sys} reads 
    \begin{align*}
        \dot V(x) & = f(x)^T\nabla f(x) \dot x + \dot x^T\nabla f(x)^Tf(x)\\
        & = - f(x)^TH(x)f(x).
    \end{align*}
    We first provide a lower bound on $f(x)^TH(x)f(x)$ using a similar analysis as in \cite[Theorem 8]{cortes2006finite} under the assumptions on $H$. 
    It holds that $\lambda_2(H(x))\geq \lambda_0$ for all $x$ per definition of $\lambda_0$. Now, it also holds that for any $u\in \mathbb R^n$
    \begin{align*}
        u^TH(x)u \geq \lambda_0\|u-\pi_{H_0(H(x))}u\|^2,
    \end{align*}
    where $\pi_S(b)$ denotes the projection of the vector $b\in \mathbb R^n$ on the set $S\subset\mathbb R^n$ and $H_0(A)$ denotes the eigenspace corresponding to the zero eigenvalue of a matrix $A\in \mathbb R^{n\times n}$. 
    
    Per either of the assumptions on $H$, it holds that 
    $f(x)^TH(x)f(x)\geq \lambda_0\|f(x)\|^2$ for all $x$. Thus, we obtain:
    \begin{align*}
        \dot V(x) = - f(x)^TH(x)f(x)\leq - \lambda_0\|f(x)\|^2 = -\lambda_0V(x),
    \end{align*}
    for all $x\in \mathbb R^n$. As a result, we obtain that the origin of \eqref{eq: sys} is GAS. 
    
    Now, the derivative of the same function $V$ along the trajectories of \eqref{eq: norm sys} reads:
    \begin{align*}
        \dot V(x) & = f(x)^T\nabla f(x) \dot x + \dot x^T\nabla f(x)^Tf(x)\\
        & = - f(x)^TH(x)f(x)\left(\frac{1}{\|f(x)\|^p}+\frac{1}{\|f(x)\|^q}\right).
    \end{align*}
    Thus, we obtain
    \begin{align*}
        \dot V(x) & = - f(x)^TH(x)f(x)\left(\frac{1}{\|f(x)\|^p}+\frac{1}{\|f(x)\|^q}\right)\\
        & \leq - \lambda_0\|f(x)\|^2\left(\frac{1}{\|f(x)\|^p}+\frac{1}{\|f(x)\|^q}\right)\\
        & = -\lambda_0(\|f(x)\|^{2-p}+\|f(x)\|^{2-q})\\
        & = -\lambda_0 \left(V(x)^\frac{2-p}{2}+V(x)^\frac{2-q}{2}\right).
    \end{align*}
    Hence, using \cite[Lemma 2]{polyakov2012nonlinear}, we have that the origin is FxTS for \eqref{eq: norm sys} with settling-time function satisfying
    \begin{align*}
        T\leq \frac{1}{\lambda_0(1-\alpha)} + \frac{1}{\lambda_0(\beta-1)},
    \end{align*}
    where $0<\alpha = \frac{2-p}{2}<1$ and $\beta = \frac{2-q}{2}>1$. 
\end{proof}

\begin{Remark}
The assumptions on $H$ in Theorem \ref{thm: FxTS second order} are inspired from the results in \cite{cortes2006finite} where the authors relax the condition of positive-definiteness requiring a uniform positive lower bound on the smallest eigenvalue of a concerned matrix (in this case, $H$) with positive semi-definiteness with additional condition that the dynamical function $f$ has a zero projection in the null space of the concerned matrix. Here, we use the same setup, but for the modified system \eqref{eq: norm sys}, to prove FxTS. Note also that the first condition on $H$ in Theorem \ref{thm: FxTS second order} can also be used to establish (local) exponential stability of the origin for \eqref{eq: sys}. As a result, this result concretely proves the equivalence of the stability notions ES, AS, FTS, and FxTS. 
\end{Remark} 

Now, we state our final result for the case when $f$ is a gradient field, i.e., there exists a scalar function $V:\mathbb R^n\to \mathbb R$ such that $\nabla V(x) = -f(x)$. In this case, we define a piecewise dynamics given as
\begin{align}\label{eq: norm piecewise}
    \dot x = \begin{cases}
        c\frac{f(x)}{\|f(x)\|^\alpha} & x\neq 0, \; \|f(x)\|\leq 1, \\
        c\frac{f(x)}{\|f(x)\|^\beta} & \|f(x)\|> 1, \\
        0 & x = 0,
    \end{cases}
\end{align}
where $f$ has the same conditions as in \eqref{eq: norm sys}, with $\alpha\in (0, 1)$,  $\beta<0$ and $c>0$ given as
\begin{align*}
    c = \left(\frac{2-\beta}{2-\alpha}\right)^{\frac{1}{p-q}},
\end{align*}
where $p = \frac{2-2\alpha}{2-\alpha}$ and $q = \frac{2-2\beta}{2-\beta}$. Note that $0<p<1$ for $\alpha\in (0, 1)$ and $q>1$ for $\beta<0$. It is easy to prove that the right-hand side of \eqref{eq: norm piecewise} is continuous on $\mathbb R^n$ and Lipschitz on $\mathbb R^n\setminus\{0\}$. The parameter $c>0$ here comes in handy to construct a continuous function $\phi$ that satisfies the conditions given in Theorem \ref{thm: FxTS LaSalle second order}. We can now state the following result. 

\begin{Theorem}\label{thm: FxTS second order gradient field}
    Assume that $f$ is a gradient field with $\int_0^xf(y)^Tdy\to -\infty$ as $x\to \infty$. Let either of the following conditions holds for the matrix function $H:\mathbb R^n\to \mathbb R^{n\times n}$ defined as $H(x) =-\frac{1}{2}(\nabla f(x) + \nabla f(x)^T)$:
    \begin{itemize}
        \item[(i)] The matrix $H(x)$ is positive definite with its minimum eigenvalue $\lambda_0>0$ for all $x\neq 0$; or
        \item[(ii)] The matrix $H(x)$ is positive semidefinite for all $x\in \mathbb R^n\setminus \{0\}$, the multiplicity of the eigenvalue $0$ is constant over $\mathbb R^n$ and $f(x)$ is orthogonal to the eigenspace of $H(x)$ corresponding to the eigenvalue $0$ for all $x$, and the second smallest eigenvalue of $H$ is positive for all $x\in \mathbb R^n$, i.e., $\inf_{x}\lambda_2(H(x)) = \lambda_0>0$
    \end{itemize}
    Then, the origin for \eqref{eq: norm piecewise} is FxTS. 
\end{Theorem}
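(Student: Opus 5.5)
Since $f$ is a gradient field, we set $V(x) = -\int_0^x f(y)^T dy$, so that $\nabla V = -f$. By hypothesis $V$ is radially unbounded, and it has compact sublevel sets. The plan is to invoke Theorem \ref{thm: FxTS LaSalle second order}, or more simply to run the same Lyapunov argument as in Theorem \ref{thm: FxTS second order} directly on $W(x) = \|f(x)\|^2$.

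Along \eqref{eq: norm piecewise} we have $L_gV = -c\|f\|^{2-\alpha}$ on $\{\|f\|\le 1\}$ and $L_gV = -c\|f\|^{2-\beta}$ on $\{\|f\|>1\}$, where $g$ is the right-hand side of \eqref{eq: norm piecewise}. Hence $L_gV\le 0$, and $M=\{x: L_gV(x)=0\} = \{f=0\}=\{0\}$ because $f(x)\neq 0$ for $x\neq 0$. The sublevel set $S=\{V\le V(x(0))\}$ is then invariant, and the conditions of Theorem \ref{thm: FxTS LaSalle second order} hold: $g$ is Lipschitz off the origin, and $V$ is $C^1$ with Lipschitz gradient locally.

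Next, set $r = -L_gV$, a scalar function of $\|f\|$, and compute $L_g(L_gV)$. Writing $\|f\|^{2-\alpha} = (\|f\|^2)^{(2-\alpha)/2}$ and differentiating,
\begin{align*}
\tfrac{d}{dt}\|f\|^2 = 2f^T\nabla f\,\dot x = -2c\,\|f\|^{-\alpha} f^T H f \le -2c\lambda_0\|f\|^{2-\alpha}
\end{align*}
on $\{\|f\|\le 1\}$. This uses the eigenvalue bound $f^THf\ge \lambda_0\|f\|^2$, obtained exactly as in the proof of Theorem \ref{thm: FxTS second order} under (i) or (ii); the orthogonality in (ii) removes the null-space component. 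It follows that $L_g(L_gV) = c\frac{2-\alpha}{2}\|f\|^{-\alpha}\frac{d}{dt}(-\|f\|^2)\ge c^2(2-\alpha)\lambda_0\|f\|^{2-2\alpha}$. Since $r = c\|f\|^{2-\alpha}$, we have $\|f\|^{2-2\alpha} = (r/c)^{p}$ with $p=\frac{2-2\alpha}{2-\alpha}$. Thus $L_g(L_gV)\ge \phi_1(r):=\lambda_0 c^{2-p}(2-\alpha) r^{p}$. Similarly, on $\{\|f\|>1\}$ we get $L_g(L_gV)\ge \phi_2(r):=\lambda_0 c^{2-q}(2-\beta)r^{q}$ with $q=\frac{2-2\beta}{2-\beta}>1$.

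The main obstacle is to glue these into a single continuous, monotone $\phi$, as Theorem \ref{thm: FxTS LaSalle second order} requires. At the switching surface $\|f\|=1$ both regimes give $r=c$, and continuity $\phi_1(c)=\phi_2(c)$ amounts to $c^{2}(2-\alpha)=c^{2}(2-\beta)$, which fails as stated. I would therefore check whether the specific choice $c=\left(\frac{2-\beta}{2-\alpha}\right)^{1/(p-q)}$ makes the two power laws cross exactly at some point $r^\ast$ so that $\phi=\max(\phi_1,\phi_2)$ or a min of them is continuous. If that cannot be arranged, I would instead take $\phi(r)=\min\{\phi_1(r),\phi_2(r)\}$, which is automatically continuous, increasing, and positive for $r>0$; since $\phi_1$ and $\phi_2$ are both valid lower bounds on their respective regions, $\phi$ is a valid lower bound everywhere. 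Then $\phi(r)\sim r^p$ near $0$ and $\phi(r)\sim r^q$ near $\infty$ for large $r$. Hence $\int_0^{\infty}dr/\phi(r)<\infty$, since $p<1<q$ and the minimum only swaps which branch dominates on a bounded interval. This gives the uniform bound $T_M$, and Theorem \ref{thm: FxTS LaSalle second order} yields fixed-time convergence to $E\subset M\cap S=\{0\}$. Lyapunov stability follows from the invariance of the sublevel sets of the radially unbounded $V$, which has its unique minimizer at $0$, so the origin is FxTS.
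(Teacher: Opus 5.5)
\textbf{Gap in the gluing step.} Up to the gluing your argument follows the paper's route: the same $V=-\int_0^x f^Tdy$, the same appeal to Theorem~\ref{thm: FxTS LaSalle second order}, and the same regime-wise bounds. These are $\phi_1(r)=\lambda_0c^{2-p}(2-\alpha)r^p$, valid for $r\le c$ (i.e.\ $\|f\|\le 1$), and $\phi_2(r)=\lambda_0c^{2-q}(2-\beta)r^q$, valid for $r>c$. Your diagnosis is correct: at the real regime boundary $r=c$ the two bounds are $\lambda_0c^2(2-\alpha)<\lambda_0c^2(2-\beta)$ for every $c$. The paper glues its branches at $r=1$ and appeals to the choice of $c$, which does not resolve this, because the boundary sits at $r=c$.

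The gap is in your remedy. $\phi=\min\{\phi_1,\phi_2\}$ is a valid lower bound, but its asymptotics are the reverse of what you claim. Since $\phi_2(r)/\phi_1(r)=c^{p-q}\frac{2-\beta}{2-\alpha}r^{q-p}$ tends to $0$ as $r\to 0$ and to $\infty$ as $r\to\infty$, the minimum is $\phi_2\propto r^q$ near $0$ and $\phi_1\propto r^p$ near $\infty$. The branches cross only once, at $r^\ast$ with $(r^\ast/c)^{q-p}=\frac{2-\alpha}{2-\beta}$. So the swap happens on two unbounded intervals, not a bounded one. Consequently $\int dr/\phi$ diverges at both ends ($q>1$ at $0$, $p<1$ at $\infty$), and Theorem~\ref{thm: FxTS LaSalle second order} gives neither finite- nor fixed-time convergence. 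The $\max$ has the right asymptotics but is not a valid bound. Since $r^\ast<c$, on $(r^\ast,c)$, still inside $\{\|f\|<1\}$, the max equals $\phi_2>\phi_1$, whereas $\phi_1$ is attained exactly there for $f(x)=-\lambda_0 x$.

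\textbf{Repair.} Keep the regime-wise bound and lower only the upper branch: $\hat\phi(r)=\phi_1(r)$ for $0\le r\le c$ and $\hat\phi(r)=\frac{2-\alpha}{2-\beta}\phi_2(r)$ for $r>c$. This $\hat\phi$ is continuous at $r=c$ (both sides equal $\lambda_0c^2(2-\alpha)$), increasing, and below the valid bound everywhere. It behaves like $r^p$ near $0$ and $r^q$ near $\infty$, so $\int_0^\infty dr/\hat\phi<\infty$ and the rest of your argument goes through.

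Alternatively, the route you mentioned first and then dropped avoids gluing altogether. Set $W=\|f\|^2$. Your inequality for $\frac{d}{dt}\|f\|^2$ and its analogue on $\{\|f\|>1\}$ give $\dot W\le-2c\lambda_0\max\{W^{(2-\alpha)/2},W^{(2-\beta)/2}\}\le-c\lambda_0\left(W^{(2-\alpha)/2}+W^{(2-\beta)/2}\right)$. Here the two regimes meet at $W=1$ with equal values automatically. Since $W(x)=0$ only at $x=0$ and trajectories stay in the compact sublevel sets of $V$, the comparison argument from the proof of Theorem~\ref{thm: FTS LaSalle} applies. It gives a settling time at most $\frac{2}{c\lambda_0\alpha}+\frac{2}{c\lambda_0(-\beta)}$, independent of $x(0)$.
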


\begin{proof}
{
    The proof is based on satisfying all the conditions of Theorem \ref{thm: FxTS LaSalle second order}. 
    Inspired from the \textit{variable gradient method} of constructing Lyapunov functions (see, e.g., \cite[pp 120]{khalil2002nonlinear}), consider a Lyapunov-like function $V:\mathbb R^n\to \mathbb R$ defined as $V(x) = -\int_0^xf(y)^Tdy$. We consider the cases $\|f(x)\|\leq 1$ and $\|f(x)\|>1$ separately. First, consider the case when $\|f(x)\|\leq 1$. Since $f$ is a gradient field, this integral is path-independent and it holds that $V$ is continuously differentiable and $\dot V$ along the trajectories of \eqref{eq: norm piecewise} is given as:
    \begin{align*}
        \dot V(x) & = -f(x)^T\left(\frac{1}{\|f(x)\|^{\alpha}}\right)f(x) = -c\|f(x)\|^{2-\alpha},
    \end{align*}
    implying $\dot V(x)\leq 0$ for all $x$ such that $\|f(x)\|\leq 1$, and $\dot V(x) = 0$ when $f(x) = 0$, or equivalently, $x = 0$. Next, let $g$ denote the RHS of \eqref{eq: norm piecewise} and consider $ L_g(\dot V(x))$ for the case when $\|f(x)\|\leq 1$ given as
    \begin{align*}
        L_g\left(\dot V(x)\right)= & -c^2(2-\alpha)\|f\|^{1-\alpha}\frac{f(x)^T}{\|f(x)\|}\nabla f(x)\frac{f(x)}{\|f(x)\|^\alpha}\\ 
        & = \frac{c^2}{2}(2-\alpha)f(x)^TH(x)f(x)\|f(x)\|^{-2\alpha},
    \end{align*}
    where the last equality is obtained using the fact that $a^TBa = a^TB^Ta = \frac{1}{2}a^T(B+B^T)a$ for any $a\in \mathbb R^n$ and $B\in \mathbb R^{n\times n}$. Similar to the analysis in Theorem \ref{thm: FxTS second order}, we can show that it holds that $f(x)^TH(x)f(x)\geq \lambda_0\|f(x)\|^2$ for all $x$. Hence, we obtain:
    \begin{align*}
        \tilde L_g\left(\dot V(x)\right)\geq & \frac{c^2}{2}(2-\alpha)\lambda_0(\|f(x)\|^{2-2\alpha} \\
        \geq &\frac{c^2}{2}(2-\alpha)\lambda_0 \left(\frac{-\dot V(x)}{c}\right)^p.
    \end{align*}
    Repeating the same analysis again for the case $\|f(x)\|>1$, we obtain
    \begin{align*}
        \tilde L_g\left(\dot V(x)\right)\geq  \frac{c^2}{2}(2-\beta)\lambda_0 \left(\frac{-\dot V(x)}{c}\right)^q,
    \end{align*}
     Let $\phi:\mathbb R\to \mathbb R$ be defined as
     \begin{align*}
         \phi(r) = \begin{cases}
             \frac{c^{2-p}}{2}(2-\alpha)\lambda_0 \left(r^p\right) & r\leq 1, \\
             \frac{c^{2-q}}{2}(2-\beta)\lambda_0 \left(r^q\right) & r>1,
         \end{cases}
     \end{align*}
     so that
    \begin{align*}
        \tilde L_g\left(\dot V(x)\right)\geq \phi(-\dot V(x)) \quad \forall x\in \mathbb R^n. 
    \end{align*}
    It is easy to show that $\phi$ is continuous at $r = 1$ thanks to the choice of the parameter $c>0$. 
    With this definition of the function $\phi$, we have that $r\phi(r)> 0$ for all $r\neq 0$ and $\phi(0) = 0$. We also have
    \begin{align*}
        \int_0^{r_0}\frac{dr}{\phi(r)} & =  \int_0^{1}\frac{dr}{\phi(r)}+\int_1^{r_0}\frac{dr}{\phi(r)},
    \end{align*}
    for all $r_0\in \mathbb R_+$. 
    Since $\phi$ is a monotonic function, when $0<r_0<1$, it follows that
    \begin{align*}
        \int_0^{r_0}\frac{dr}{\phi(r)} & = \int_0^{r_0}\frac{2 c^{p-2}dr}{(2-\alpha)\lambda_0r^p} \\
        & \leq \int_0^{1}\frac{2c^{p-2}dr}{(2-\alpha)\lambda_0r^p} \\
        & =  \frac{2c^{p-2}}{(2-\alpha)\lambda_0(1-p)}.
    \end{align*}
    For $r_0>1$, we obtain
    \begin{align*}
        \int_1^{r_0}\frac{dr}{\phi(r)} & = \int_1^{r_0}\frac{2c^{q-2}dr}{(2-\beta)\lambda_0r^q}\\
        & \leq \int_1^{\infty}\frac{2c^{q-2}dr}{(2-\beta)\lambda_0r^q}\\
        & = \frac{2c^{q-2}}{(2-\beta)\lambda_0(q-1)}
    \end{align*}
    As a result, we obtain that
    \begin{align*}
        \int_0^{v_0}\frac{dv}{\phi(v)} &\leq \frac{2c^{p-2}}{(2-\alpha)\lambda_0(1-p)} + \frac{2c^{q-2}}{(2-\beta)\lambda_0(q-1)},
    \end{align*}
    for all $v_0\in \mathbb R_+$. Hence, all the conditions of Theorem \ref{thm: FxTS LaSalle second order} are satisfied. As a result, it holds that $x(t)\to M$ within a fixed time for all $x(0)$. However, $M = \{x\; |\; L_fV(x) = 0\}$, which is just the singleton set $M = \{0\}$. Hence, the solutions of \eqref{eq: norm piecewise} reach the origin within a fixed time, or the origin is FxTS for \eqref{eq: norm piecewise}. }
\end{proof}
\begin{Remark}
    It is not necessary to consider piecewise dynamics. A similar, albeit more convoluted analysis, can be carried out for the same dynamics as in \eqref{eq: norm sys} under the assumptions of Theorem \ref{thm: FxTS second order gradient field}. However, we adopt this mechanism for piecewise analysis, inspired in part by the original proof of FxTS in \cite{polyakov2012nonlinear}, to keep the exposition simple. 
\end{Remark}

\section{Conclusions}\label{sec: conclusions}
We presented new results on Lyapunov-like theorems using non-smooth analysis for finite- and fixed-time stability of dynamical systems. In particular, we provided conditions similar to the ones used in LaSalle's invariance principle for convergence to an invariant set within a fixed amount of time. Then, we proved that a particular normalization or scaling of a system, whose equilibrium is asymptotically stable, leads to fixed-time stability, thereby connecting these notions of stability and proving the equivalence between asymptotic, finite- and fixed-time stability for a general class of dynamical systems. 
We are currently exploring the utility of these results for analyzing algorithms for solving non-smooth optimization problems. The next steps for this line of work are to explore how these results can be extended and used for control systems, both for design and analysis. 

\section{Acknowledgments}
The author acknowledges Dr. Rohit Gupta for several fruitful discussions.

\bibliographystyle{IEEEtran}
\bibliography{myreferences}

\end{document}